\newtheorem{thm}{Theorem}
\newtheorem{lem}{Lemma}
\newtheorem{defi}{Definition}
\newtheorem{assu}{Assumption}
\newtheorem{rem}{Remark}
\newtheorem{cor}{Corollary}
\newtheorem{propo}{Proposition}
\newcommand\relentr[2]{\text{H}(#1\,\vert\,#2)\vert_{\mathcal{F}^n}}
\newcommand\relentrk[2]{\text{H}(#1\,\vert\,#2)\vert_{\mathcal{F}^k}}
\title{On the specific relative entropy between martingale diffusions on the line}
\author{Julio Backhoff-Veraguas\footnote{University of Vienna.  JB acknowledges support by the Austrian Science Fund (FWF) through projects Y 00782 and P 36835.} \and Clara Unterberger\footnote{University of Vienna.}}
\begin{document}

\maketitle
The specific relative entropy, introduced in the Wiener space setting by N.\ Gantert, allows to quantify the discrepancy between the laws of  potentially mutually singular measures. It appears naturally as the large deviations rate function in a randomized version of Donsker's invariance principle, as well as in a novel transport-information inequality recently derived by H.\ F\"ollmer. A conjecture, put forward by the aforementioned authors, concerns a closed form expression for the specific relative entropy between continuous martingale laws in terms of their quadratic variations. 
 We provide a first partial result in this direction, by establishing this conjecture in the case of well-behaved martingale diffusions on the line.

\section{Introduction}

Let $(\mathcal{C},\mathcal{F})$ be the Wiener space $\mathcal{C}=C([0,1],\mathbb{R})$ and $\mathcal{F}=\sigma(X_t:t\in[0,1])$ where $X_t\colon \mathcal{C}\to\mathbb{R}$ with $X_t(\omega)=\omega (t)$ is the canonical process. Define also $$\mathcal{F}^n \coloneqq \sigma (X_{\frac{k}{n}}:k=0,1,\dots,n).$$

The central object of study in this article is the concept of specific relative entropy, introduced in the setting of Wiener space by N.\ Gantert in her PhD thesis \cite{Ga91} and recently studied by the works of H.\ F\"ollmer \cite{Fo22,Fo22a}.

\begin{defi}
Given $\mathbb{Q},\mathbb{P}$ probability measures on $(\mathcal{C},\mathcal{F})$, 
the upper specific relative entropy of $\mathbb{Q}$ with respect to $\mathbb{P}$ is defined as
\begin{equation*}
    h^u\left( \mathbb{Q} \, | \,\mathbb{P} \right) 
    \coloneqq \limsup_{n\to\infty} \frac{1}{n} \relentr{\mathbb{Q}}{\mathbb{P}},
\end{equation*}
where $\relentr{\mathbb{Q}}{\mathbb{P}}$ is the relative entropy of $\mathbb Q$ with respect to $\mathbb P$ restricted to $\mathcal{F}^n$, namely
\begin{equation*}
    \relentr{\mathbb{Q}}{\mathbb{P}} 
    \coloneqq \begin{cases}
    \int_{\mathcal{C}} \log\frac{d\mathbb{Q}}{d\mathbb{P}}\vert_{\mathcal{F}^n} \, d\mathbb{Q} & \text{if $\,\mathbb{Q}\vert_{\mathcal{F}^n} \ll \mathbb{P}\vert_{\mathcal{F}^n} $}  \\
    +\infty  & \text{otherwise}.
  \end{cases}
\end{equation*}

Similarly the lower specific relative entropy of $\mathbb{Q}$ with respect to $\mathbb{P}$ is defined as
\begin{equation*}
    h^\ell\left( \mathbb{Q} \, | \,\mathbb{P} \right) 
    \coloneqq \liminf_{n\to\infty} \frac{1}{n} \relentr{\mathbb{Q}}{\mathbb{P}}.
\end{equation*}
Finally, if $h^\ell( \mathbb{Q} \, | \,\mathbb{P})=h^u( \mathbb{Q} \, | \,\mathbb{P})$, we denote this common value by $h( \mathbb{Q} \, | \,\mathbb{P})$ and call it the specific relative entropy of $\mathbb{Q}$ with respect to $\mathbb{P}$.
\end{defi}

\begin{rem} $ $

\begin{enumerate}
\item
The specific relative entropy is meaningful even in situations where measures singular to each other are being compared. This is the case of continuous martingale laws, which typically have infinite relative entropy but may still have a finite specific relative entropy when compared with each other. In \cite[Kapitel II.4]{Ga91} it was shown that $h(\cdot|\mathbb W)$, with $\mathbb W$ being Wiener measure, is the rate function in a large deviations principle associated to a randomized Donsker-type approximation of Brownian motion.

\item
We took the liberty to introduce the terminology of ``upper/lower specific relative entropies''. In fact in \cite{Fo22} already $h^\ell$ is called specific relative entropy, whereas \cite{Ga91} restricts itself to the case when $h^\ell$ and $h^u$ coincide. The works \cite{Fo22,Fo22a} considers rather the subsequence of dyadic sigma-algebras $\mathcal F^{2^n}$ in the definitions of specific relative entropy. 
\item
Both \cite{Fo22,Fo22a} and \cite{Ga91} mostly study the specific relative entropy for $\mathbb{Q},\mathbb{P}$ being general martingale laws, or even more general stochastic processes than martingales. In the present work we will specialize the discussion to $\mathbb{Q},\mathbb{P}$ being the laws of very well-behaved \emph{martingale diffusions}. 

\item The concept of specific relative entropy is older than the work \cite{Ga91}. For instance this concept appears in Donsker and Varadhan's study  of large deviations for stationary Gaussian processes \cite{DoVa85} and in Föllmer's study  of lattice models \cite{Fo88}.
 \end{enumerate}
\end{rem}

Let $B=(B_t)_{t\in[0,1]}$ be a one-dimensional Brownian motion on some filtered probability space $(\Omega,\mathcal{S},(\mathcal{S}_t)_{t\in[0,1]},\mathbb{S})$.
Let $M^x=(M^x_t)_{t\in[0,1]}$ be the solution of
\begin{equation*}
    \begin{cases}
    dM^x_t = \sigma (M^x_t) \, dB_t \\
    M^x_0=x,
    \end{cases}
\end{equation*}
and likewise let $N^x=(N^x_t)_{t\in[0,1]}$ be the solution of
\begin{equation*}
    \begin{cases}
    dN^x_t = \eta (N^x_t) \, dB_t \\
    N^x_0=x.
    \end{cases}
\end{equation*}
Throughout this article we assume rather strong regularity assumptions on the diffusion coefficients $\sigma,\eta$. They will guarantee a unique strong solution to the above stochastic differential equations in particular,  but more importantly, they will allow us to gauge the small-time behaviour of $M^x$ and $N^x$. The precise assumptions are:

\begin{assu}\label{assu_regularity}
The coefficients $\sigma,\eta\colon \mathbb{R}\to\mathbb{R}_+$ are twice continuously differentiable and such that $$ \sigma(\cdot),\eta(\cdot) \in (\delta, {1}/{\delta}),$$ for some $0<\delta\leq 1$. Moreover, for some $L\in\mathbb R$ we have $$\max\{\lVert\sigma'\rVert_{\infty},\lVert\sigma''\rVert_{\infty},\lVert\eta'\rVert_{\infty},\lVert\eta''\rVert_{\infty}\}<L.$$
\end{assu} 

The main result of this article establishes the existence of the specific relative entropy between the laws of $M^x$ and $N^x$, and provides a closed form expression for this quantity:

\begin{thm} \label{formula}
Under Assumption \ref{assu_regularity}, let $M^x$ and $N^x$ be as above and call $\mathbb{Q}^x$ and ${\mathbb{P}^x}$  their respective  laws in $(\mathcal{C},\mathcal{F})$.
Then 
\begin{itemize}
\item[(i)] the specific relative entropy of $\mathbb{Q}^x$ with respect to $\mathbb{P}^x$ exists, and
\item[(ii)] 
 it has the closed form 
\begin{equation}\label{equformula}
    h(\mathbb{Q}^x\,|\,{\mathbb{P}^x}) 
    = \frac{1}{2} \mathbb{E}\left[ \int_0^1 \left\{\frac{\sigma(M^x_s)^2}{\eta(M^x_s)^2} -1 - \log\frac{\sigma(M^x_s)^2}{\eta(M^x_s)^2} \right\}\,ds \right].
\end{equation}
\end{itemize}

\end{thm}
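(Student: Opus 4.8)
The plan is to reduce the specific relative entropy to a sum of relative entropies between one-step transition kernels, and then to pin down the small-time asymptotics of the latter. Since $\mathbb{Q}^x$ and $\mathbb{P}^x$ are laws of Markov processes, the restriction to $\mathcal{F}^n$ is governed by the finite-dimensional distributions of $(X_0,X_{1/n},\dots,X_1)$. Write $q_s(y,\cdot)$ and $p_s(y,\cdot)$ for the time-$s$ transition kernels of $M$ and $N$; under Assumption \ref{assu_regularity} these admit densities equivalent to Lebesgue measure, so in particular $\mathbb{Q}^x\vert_{\mathcal{F}^n}\ll\mathbb{P}^x\vert_{\mathcal{F}^n}$. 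The tensorization (chain rule) of relative entropy together with the Markov property then gives
$$\relentr{\mathbb{Q}^x}{\mathbb{P}^x}=\sum_{k=0}^{n-1}\mathbb{E}\big[H\big(q_{1/n}(M^x_{k/n},\cdot)\,|\,p_{1/n}(M^x_{k/n},\cdot)\big)\big],$$
the $k=0$ term vanishing because both measures put mass $\delta_x$ on $X_0$. Everything is thus reduced to the quantity $\Delta_s(y):=H\big(q_s(y,\cdot)\,|\,p_s(y,\cdot)\big)$ for small $s$.

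The main step is to show that, uniformly in $y$,
$$\lim_{s\to 0}\Delta_s(y)=g(y):=\tfrac{1}{2}\Big(\tfrac{\sigma(y)^2}{\eta(y)^2}-1-\log\tfrac{\sigma(y)^2}{\eta(y)^2}\Big).$$
The heuristic is that for small $s$ the kernels are nearly Gaussian, $q_s(y,\cdot)\approx\mathcal N(y,\sigma(y)^2 s)$ and $p_s(y,\cdot)\approx\mathcal N(y,\eta(y)^2 s)$, with the \emph{same} mean $y$ since both diffusions are martingales, so no mean-shift term survives; the relative entropy between these two Gaussians is exactly $g(y)$, independently of $s$. To make this rigorous I would use the parametrix/heat-kernel expansion of the density $m_s$ of $q_s(y,\cdot)$ in the form $m_s(z)=(2\pi\sigma(y)^2 s)^{-1/2}\exp(-(z-y)^2/(2\sigma(y)^2 s))(1+r_s(y,z))$, with an analogous expansion for the density $n_s$ of $p_s(y,\cdot)$, where Assumption \ref{assu_regularity} supplies uniform-in-$y$ control of the remainders together with Gaussian (Aronson-type) upper and lower bounds ensuring integrability. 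Substituting into $\Delta_s(y)=\int m_s\log(m_s/n_s)$, the log-ratio of the Gaussian prefactors yields $\log(\eta(y)/\sigma(y))$, while the quadratic terms reduce to the second moment $\int m_s(z)(z-y)^2\,dz=\mathbb{E}[(M^y_s-y)^2]=\int_0^s\mathbb{E}[\sigma(M^y_u)^2]\,du=\sigma(y)^2 s+o(s)$; these assemble precisely into $g(y)$, and the remainder contributions vanish uniformly.

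Writing $\Delta_{1/n}(y)=g(y)+\varepsilon_{1/n}(y)$ with $\sup_y|\varepsilon_{1/n}(y)|\to 0$, the reduction becomes
$$\frac{1}{n}\,\relentr{\mathbb{Q}^x}{\mathbb{P}^x}=\frac{1}{n}\sum_{k=0}^{n-1}\mathbb{E}[g(M^x_{k/n})]+\frac{1}{n}\sum_{k=0}^{n-1}\mathbb{E}[\varepsilon_{1/n}(M^x_{k/n})].$$
The error term is bounded by $\sup_y|\varepsilon_{1/n}(y)|\to 0$, while the first term is a Riemann sum for $\int_0^1\mathbb{E}[g(M^x_s)]\,ds$, which converges because $g$ is bounded and continuous (as $\sigma^2/\eta^2$ lies in a compact subset of $(0,\infty)$) and $s\mapsto\mathbb{E}[g(M^x_s)]$ is continuous. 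Hence $h^\ell$ and $h^u$ agree and equal $\tfrac12\mathbb{E}[\int_0^1\{\sigma(M^x_s)^2/\eta(M^x_s)^2-1-\log(\sigma(M^x_s)^2/\eta(M^x_s)^2)\}\,ds]$, establishing existence in (i) and the formula in (ii) simultaneously.

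The crux is the uniform estimate of the second paragraph: pointwise-in-$y$ convergence of $\Delta_s(y)$ would not suffice, since the $n$-th Riemann sum evaluates $\varepsilon_{1/n}$ at $n$ distinct points, so only a bound uniform in the starting point $y$ forces the error to vanish. Securing this uniformity — controlling both the density remainders and the tails of the relative-entropy integral independently of $y$ — is exactly what the strong regularity in Assumption \ref{assu_regularity} ($C^2$ coefficients, bounded with bounded derivatives, uniformly elliptic) is meant to provide, e.g.\ through uniform heat-kernel bounds, possibly after the Lamperti transform $y\mapsto\int^y\sigma^{-1}$ that turns $M$ into a unit-diffusion process with bounded drift and renders the Gaussian comparison transparent.
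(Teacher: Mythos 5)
Your proposal is correct in outline and its first half coincides with the paper's: both proofs tensorize the relative entropy along the Markov property into $n$ one-step terms, identify the small-time Gaussian behaviour of the transition kernels (your heuristic that equal means kill any mean-shift term, leaving exactly $g(y)=\tfrac12(\sigma^2/\eta^2-1-\log(\sigma^2/\eta^2))$, is precisely the mechanism at work), and finish with a Riemann-sum limit. Where you genuinely diverge is the key lemma. You posit a uniform-in-$y$ per-step limit $\Delta_s(y)\to g(y)$, to be proved by a frozen-coefficient parametrix expansion of $q_s(y,\cdot)$ around $\mathcal N(y,\sigma(y)^2s)$. The paper never establishes (nor needs) such a per-step uniform limit. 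Instead, its Lemma 1 gives a global, non-asymptotic two-sided bound $p(t,x,y)=e^{O(t)}(2\pi t)^{-1/2}\sqrt{\sigma(x)/\sigma(y)}\,\sigma(y)^{-1}e^{-d_\sigma(x,y)^2/2t}$ with the geodesic distance $d_\sigma(x,y)=|\int_x^y du/\sigma(u)|$ in the exponent, proved via the Lamperti transform plus Girsanov — the very transform you mention in your last sentence. With $d_\sigma$ in the exponent, $\log(q/p)$ becomes exactly linear in $d_\sigma^2$ and $d_\eta^2$ plus a telescoping term $\tfrac12\log\frac{\sigma(x)\eta(y)}{\sigma(y)\eta(x)}$, and the limits are then computed in expectation: the sums $\sum_k d_\eta(M_{(k-1)/n},M_{k/n})^2$ converge to the quadratic variation $\int_0^1\sigma^2/\eta^2\,du$ by It\^o applied to $F(z)=\int_0^z\eta(u)^{-1}du$. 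This buys robustness: no pointwise expansion of densities in $z$ is ever required, only expectations of Riemann and quadratic-variation sums.

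One concrete caution about your sketch: the expansion as literally written, $m_s(z)=(2\pi\sigma(y)^2s)^{-1/2}e^{-(z-y)^2/(2\sigma(y)^2s)}\bigl(1+r_s(y,z)\bigr)$ with $r_s$ uniformly small in \emph{both} $y$ and $z$, is false. Replacing $d_\sigma(y,z)^2$ by $(z-y)^2/\sigma(y)^2$ costs an error of order $|z-y|^3/s$ in the exponent, which blows up in the tails; $r_s$ is only small in the bulk $|z-y|\lesssim\sqrt{s}$, and the tail contribution to $\int m_s\log(m_s/n_s)$ must be killed separately by Gaussian upper bounds. You do flag this (Aronson bounds, tail control), so it is a repairable imprecision rather than a fatal gap — indeed your claimed uniform limit $\sup_y|\Delta_s(y)-g(y)|\to 0$ is true, with rate $O(\sqrt s)$, and follows from the paper's Lemma 1 by the expectation computations above. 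But note that repairing your parametrix step essentially reconstitutes that lemma: the $d_\sigma$-exponent is exactly the device that avoids the frozen-Gaussian tail problem, which is why the paper organizes the argument around it.
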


Recalling that $X$ stood for the canonical process, Formula \eqref{equformula} becomes
$$h(\mathbb{Q}^x\,|\,{\mathbb{P}^x})  = \frac{1}{2}\mathbb E_{\mathbb Q^x}\left[ \int_0^1 \left\{ \frac{\sigma(X_s)^2}{\eta(X_s)^2} -1 - \log\frac{\sigma(X_s)^2}{\eta(X_s)^2}  \right\} ds \right].$$
%
%
A formula of this very kind had been conjectured in \cite[Kapitel I.1]{Ga91} in the case that $\eta\equiv 1$. Specifically, calling $\mathbb{W}$ the Wiener measure and $\mathbb Q$ a martingale law (w.l.o.g.\ both started at zero), the conjecture asserted the equality 
$$h(\mathbb{Q}\,|\,{\mathbb{W}})  = \frac{1}{2}\mathbb E_{\mathbb Q}\left[ \int_0^1 \left\{ \sigma(X_s)^2 -1 - \log\sigma(X_s)^2  \right\} ds \right].$$
In fact, it was already established in \cite{Ga91,Fo22,Fo22a} that the r.h.s.\ here is always the smaller one, and this fact has already found an application in \cite{Fo22,Fo22a} were a novel transport-information inequality was derived. Yet to the best of our knowledge the conjecture (i.e.\ the equality of left- and right-hand sides) had only been confirmed in the following cases: for Gaussian martingales, i.e.\ when $\sigma$ is deterministic (that is to say, not a function of space), for the case of geometric Brownian motion, and for the case of $M_t=\int_0^tB_sdB_s$. Our contribution is hence to provide a first corroboration of this conjecture beyond the Gaussian case in a rather systematic way. Furthermore we also allowed for $\eta$ being non-constant. This seems relevant to us, as one could be possibly interested in other reference martingales than Brownian motion. The proof of Formula \eqref{equformula} is achieved by careful expansion of the transition densities of the involved martingales. This goes some way into explaining why we restrict ourselves to martingale diffusions with fairly regular coefficients. 


As a corollary to Theorem \ref{formula} we are able to obtain a statistical estimator for the specific relative entropy based on independent samples. As in functional data analysis \cite{WaChMu16} (in the so-called asymptotic dense regime), the idea is that we have access to iid sample paths of a continuous process which we can observe only at mesh points, the size of the mesh going to zero as the number of sampled paths grows (in particular, no sampled path can be observed at more than a finite number of time instances). This is the content of our second main result: 

\begin{cor}\label{corollary}
Under Assumption \ref{assu_regularity}, let $M^x$ and $N^x$ be as before and call $\mathbb{Q}^x$ and ${\mathbb{P}^x}$  their respective  laws in $(\mathcal{C},\mathcal{F})$. Let $\{M^{x,n}\}_{n\in\mathbb N}$ be iid samples from  $\mathbb{Q}^x$. Then we have the almost sure convergence
\begin{align}
\lim_{n\to\infty}\frac{1}{n}\sum_{k\leq n}\frac{1}{k} \log\left( \frac{ d \mathbb Q^x}{d\mathbb P^x}\Big \vert_{\mathcal F^k} \Big . \big (M^{x,k}_{\frac{0}{k}},M^{x,k}_{\frac{1}{k}},\dots, M^{x,k}_{\frac{k}{k}}\big )  \right) =  h(\mathbb{Q}^x\,|\,{\mathbb{P}^x}).
\end{align} 
\end{cor}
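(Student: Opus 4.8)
The plan is to recognise the quantity on the left-hand side as a Cesàro average of \emph{independent} random variables whose expectations converge to $h(\mathbb{Q}^x\,|\,\mathbb{P}^x)$, and then to invoke a strong law of large numbers. For each $k$ put
$$Y_k \;:=\; \frac{1}{k}\log\left(\frac{d\mathbb{Q}^x}{d\mathbb{P}^x}\Big|_{\mathcal{F}^k}\big(M^{x,k}_{0/k},\dots,M^{x,k}_{k/k}\big)\right),$$
so that the left-hand side is exactly $\frac{1}{n}\sum_{k\le n}Y_k$. Since $Y_k$ is a measurable function of the single sample $M^{x,k}$ and the samples $\{M^{x,n}\}_n$ are independent, the sequence $(Y_k)_k$ is independent. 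Moreover, as $M^{x,k}\sim\mathbb{Q}^x$, taking expectations and recalling the definition of relative entropy gives $\mathbb{E}[Y_k]=\frac{1}{k}\relentrk{\mathbb{Q}^x}{\mathbb{P}^x}$, which by Theorem \ref{formula} converges to $h(\mathbb{Q}^x\,|\,\mathbb{P}^x)$; hence the Cesàro averages $\frac{1}{n}\sum_{k\le n}\mathbb{E}[Y_k]$ converge to the same limit.

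It therefore remains to prove $\frac{1}{n}\sum_{k\le n}\big(Y_k-\mathbb{E}[Y_k]\big)\to 0$ almost surely. Since the $Y_k$ are independent, Kolmogorov's strong law of large numbers reduces this to the summability condition $\sum_k \mathrm{Var}(Y_k)/k^2<\infty$, for which it is enough to show that $\mathrm{Var}(Y_k)$ is bounded uniformly in $k$. Here I would use the Markov property of $M^x$: denoting by $q_t,p_t$ the transition densities of $M^x,N^x$, the restricted Radon--Nikodym density factorises across the grid, so that $kY_k=\sum_{j=1}^k\xi_j$ with
$$\xi_j \;:=\; \log\frac{q_{1/k}\big(M^{x,k}_{(j-1)/k},M^{x,k}_{j/k}\big)}{p_{1/k}\big(M^{x,k}_{(j-1)/k},M^{x,k}_{j/k}\big)}.$$
By Cauchy--Schwarz, $\mathrm{Var}(kY_k)=\sum_{i,j}\mathrm{Cov}(\xi_i,\xi_j)\le k^2\sup_{j\le k}\mathrm{Var}(\xi_j)$, whence $\mathrm{Var}(Y_k)\le\sup_{j\le k}\mathrm{Var}(\xi_j)$.

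Everything thus reduces to a bound on the variance of a single log-ratio of transition densities over a time step of length $1/k$, uniformly in $j\le k$ and in $k$. Conditioning on the starting point $M^{x,k}_{(j-1)/k}=y$, the small-time expansions of $q_{1/k}$ and $p_{1/k}$ that already power the proof of Theorem \ref{formula} should exhibit $\xi_j$ as, modulo uniformly bounded terms, a bounded multiple of the rescaled squared increment $\tfrac{k}{2}\big(M^{x,k}_{j/k}-y\big)^2$; under $\mathbb{Q}^x$ this quantity is approximately $\sigma(y)^2$ times a chi-squared variable, whose variance is controlled uniformly in $y$ thanks to Assumption \ref{assu_regularity} (which keeps $\sigma,\eta$ bounded away from $0$ and $\infty$ with bounded derivatives). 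This gives $\sup_{j\le k}\mathrm{Var}(\xi_j)\le V<\infty$ for all large $k$, the finitely many remaining $k$ being harmless, and hence $\sup_k\mathrm{Var}(Y_k)<\infty$. I expect this last variance estimate --- transferring the transition-density expansions of Theorem \ref{formula} into genuine second-moment control of $\xi_j$ --- to be the main obstacle; granting it, the two convergences above combine through $\frac{1}{n}\sum_{k\le n}Y_k=\frac{1}{n}\sum_{k\le n}\big(Y_k-\mathbb{E}[Y_k]\big)+\frac{1}{n}\sum_{k\le n}\mathbb{E}[Y_k]$ to yield the claimed almost sure limit $h(\mathbb{Q}^x\,|\,\mathbb{P}^x)$.
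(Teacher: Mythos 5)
Your proposal is correct in outline and shares its architecture with the paper's proof: the same random variables $Y_k$, the same observation that they are independent with $\mathbb{E}[Y_k]=\frac{1}{k}\relentrk{\mathbb{Q}^x}{\mathbb{P}^x}\to h(\mathbb{Q}^x\,|\,\mathbb{P}^x)$ by Theorem \ref{formula}, and the same appeal to Kolmogorov's strong law for independent, non-identically distributed summands, reducing everything to $\sum_k \mathrm{Var}(Y_k)/k^2<\infty$. Where you genuinely diverge is in the second-moment estimate. The paper discards the ratio structure at once: it bounds $Y_k^2\leq \frac{2}{k^2}\{\log(q_k)^2+\log(p_k)^2\}$ and controls $\log q_k$ and $\log p_k$ \emph{separately} via the crude Gaussian-type bounds of Lemma \ref{density_2}; this leaves uncancelled normalization terms of order $\log(k)$ per factor, so $\mathbb{E}[Y_k^2]$ comes out of order $\log(k)^2$ --- not uniformly bounded, but amply summable against $1/k^2$, which is all the paper needs. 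You instead keep the ratio $\xi_j=\log(q/p)$, in which the $\tfrac{1}{\sqrt{2\pi t}}$ normalizations cancel, and aim for the stronger statement $\sup_k\mathrm{Var}(Y_k)<\infty$; your Cauchy--Schwarz step $\mathrm{Var}(Y_k)\leq\sup_{j\leq k}\mathrm{Var}(\xi_j)$ is fine (and correctly avoids any independence claim for the $\xi_j$, which are dependent). The step you flag as the main obstacle does close, with exactly the tools the paper already has: the two-sided bounds of Lemma \ref{density} give the pointwise estimate $|\xi_j|\leq C_0+\frac{k}{2\delta^2}\bigl(M^{x,k}_{j/k}-M^{x,k}_{(j-1)/k}\bigr)^2$, since the log-ratio terms involving $\sigma,\eta$ are bounded by constants depending on $\delta$ and $|d_\sigma^2-d_\eta^2|\leq (x-y)^2/\delta^2$, and then the BDG bound $\mathbb{E}\bigl[\bigl(M^{x,k}_{j/k}-M^{x,k}_{(j-1)/k}\bigr)^4\bigr]\leq c/(\delta^4k^2)$ --- used verbatim in the paper's proof --- yields $\mathbb{E}[\xi_j^2]\leq 2C_0^2+c'/\delta^8$ uniformly in $j$ and $k$. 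One caveat: your heuristic that the rescaled squared increment is ``approximately $\sigma(y)^2$ times a chi-squared variable'' should not survive into a final write-up, as conditionally on the left endpoint the increment is not Gaussian; the fourth-moment bound via BDG replaces it rigorously and is all you need. Net comparison: your route requires the sharp two-sided Lemma \ref{density} rather than the weaker Lemma \ref{density_2}, and in exchange delivers a slightly cleaner conclusion (uniform variance bounds, no $\log k$ losses), while the paper's cruder splitting tolerates the logarithmic growth because only weighted summability is required.
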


The proof of this corollary is based on Theorem \ref{formula}, a strong law of large numbers for independent (possibly non-identically distributed) samples, and the short time behaviour of the transition densities of the involved martingales.

In our last main result we obtain an almost-sure counterpart to Theorem \ref{formula} along dyadic partitions. For simplicity we restrict ourselves to the case when the reference martingale $N^x$ is Brownian motion started at $x$.

\begin{propo}\label{prop:as}
Suppose that $\eta\equiv 1$ and that $\sigma$ fulfills the conditions in Assumption \ref{assu_regularity}. Then we have the almost-sure limit
\begin{align}
\lim_{k\to\infty}\frac{1}{2^k}\log\left( \frac{ d \mathbb Q^x}{d\mathbb P^x}\Big \vert_{\mathcal F^{2^k}} \Big . \big (M^{x}_{\ell\cdot 2^{-k}}\big )_{\ell=0}^{2^k}  \right) =
\frac{1}{2}\int_0^1 \left\{\sigma(M^x_s)^2 -1 - \log\sigma(M^x_s)^2 \right\}\,ds 
\end{align}
\end{propo}

This proposition establishes a conjecture in \cite[p.\ 15]{Ga91} in the case when $M$ is a martingale diffusion satisying our strong regularity assumptions. Incidentally, our proof reveals that the same result is true if we replace the role of $2^k$ by $n_k$, as long as $\sum_{k=0}^\infty n_k<\infty$.

\section{Proof of Theorem \ref{formula}}

The proof of Theorem $\ref{formula}$ relies on the following rather precise estimates on the transition density function of martingale diffusions as above.  Throughout we denote
$$d_{\sigma}(a,b):=\left | \int_a^b \frac{du}{\sigma(u)}\right |.$$
The metric $d_{\sigma}(a,b)$ is the geodesic distance induced by $\sigma$ (more precisely, by $\sigma^2$) on $\mathbb R$.

\begin{lem} \label{density}
Let $\sigma$ and $M^x$ be as above and let $p(t,x,y)$ be the transition density function of $M^x$ from $x$ at time $0$ to $y$ at time $t$.
Under Assumption \ref{assu_regularity} there are  constants $C_1$ and $C_2$, depending only on $\delta$ and $L$, such that, for all $t\in(0,1]$ and $x,y\in\mathbb{R}$:
\begin{equation*}
   e^{-C_2 t} \frac{1}{\sqrt{2\pi t}} \sqrt{\frac{\sigma (x)}{\sigma (y)}}\frac{1}{\sigma (y)} e^{\left( - d_\sigma(x,y)^2/2t \right)}\leq p(t,x,y) 
    \leq e^{C_1 t} \frac{1}{\sqrt{2\pi t}} \sqrt{\frac{\sigma (x)}{\sigma (y)}} \frac{1}{\sigma (y)} e^{\left( -d_\sigma(x,y)^2/2t \right) } .
\end{equation*}

\end{lem}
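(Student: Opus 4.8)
The plan is to prove the Gaussian-type two-sided bounds on the transition density $p(t,x,y)$ by reducing the diffusion $M^x$ to Brownian motion via the natural scale transformation, which is exactly what the geodesic distance $d_\sigma$ is designed to linearize. Concretely, I would introduce the scale function $s(x)=\int_0^x \frac{du}{\sigma(u)}$, so that $d_\sigma(x,y)=|s(x)-s(y)|$, and set $Y_t:=s(M^x_t)$. By It\^o's formula,
\begin{equation*}
dY_t = s'(M^x_t)\sigma(M^x_t)\,dB_t + \tfrac{1}{2}s''(M^x_t)\sigma(M^x_t)^2\,dt = dB_t - \tfrac{1}{2}\sigma'(M^x_t)\,dt,
\end{equation*}
using $s'=1/\sigma$ and $s''=-\sigma'/\sigma^2$. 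Thus in the scale coordinate the process has unit diffusion coefficient and a bounded drift $b(y):=-\tfrac{1}{2}\sigma'(s^{-1}(y))$, whose boundedness and Lipschitz control come directly from Assumption \ref{assu_regularity} (note $\|\sigma'\|_\infty, \|\sigma''\|_\infty < L$ and $\sigma\in(\delta,1/\delta)$ make $b$ bounded and Lipschitz with constants depending only on $\delta,L$). If $q(t,u,v)$ denotes the transition density of $Y$, then a change of variables yields $p(t,x,y)=q(t,s(x),s(y))\,s'(y)=q(t,s(x),s(y))/\sigma(y)$, so the full statement reduces to Gaussian bounds on $q$.

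Next I would establish for the unit-diffusivity process $Y$ the bound
\begin{equation*}
e^{-C_2 t}\,\frac{1}{\sqrt{2\pi t}}\,\sqrt{\tfrac{\sigma(x)}{\sigma(y)}}\,e^{-|u-v|^2/2t}\ \le\ q(t,u,v)\ \le\ e^{C_1 t}\,\frac{1}{\sqrt{2\pi t}}\,\sqrt{\tfrac{\sigma(x)}{\sigma(y)}}\,e^{-|u-v|^2/2t},
\end{equation*}
where $u=s(x),v=s(y)$. The cleanest route is Girsanov's theorem: under the measure removing the drift $b$, $Y$ becomes a standard Brownian motion with the explicit Gaussian density $\frac{1}{\sqrt{2\pi t}}e^{-|u-v|^2/2t}$, and the Radon--Nikodym density is $\exp\big(\int_0^t b(Y_r)\,dB_r - \tfrac12\int_0^t b(Y_r)^2\,dr\big)$. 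Writing $b=B'$ for a primitive $B$, the stochastic integral can be turned into $B(Y_t)-B(Y_0)-\tfrac12\int_0^t b'(Y_r)\,dr$ by It\^o, so that conditioning on the endpoints $Y_0=u,Y_t=v$ the density ratio becomes $\exp\big(B(v)-B(u)\big)\cdot\mathbb{E}\big[\exp(-\tfrac12\int_0^t(b^2+b')(Y_r)\,dr)\,|\,Y_0=u,Y_t=v\big]$ against a Brownian bridge. The prefactor $\exp(B(v)-B(u))$ is precisely where the geometric factor $\sqrt{\sigma(x)/\sigma(y)}$ arises: since $b=-\tfrac12\sigma'\circ s^{-1}$ one computes $B(v)-B(u)=\tfrac12\log\frac{\sigma(x)}{\sigma(y)}$, giving $e^{B(v)-B(u)}=\sqrt{\sigma(x)/\sigma(y)}$. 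The remaining bridge expectation is then bounded above and below by $e^{\pm Ct}$ using that $b^2+b'$ is bounded by a constant $C=C(\delta,L)$, which yields $C_1,C_2$.

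The main obstacle I anticipate is making the Girsanov/It\^o manipulation of the endpoint-conditioned density fully rigorous, i.e.\ passing from the path-space Radon--Nikodym derivative to a pointwise statement about transition densities conditioned on both endpoints. One must justify disintegrating the change of measure along the terminal value $Y_t=v$ and identifying the conditioned law of the reference process as a Brownian bridge; this requires either an explicit Markov-bridge/Doob-$h$-transform argument or an appeal to the regularity of the densities guaranteed by the smooth, uniformly elliptic, bounded coefficients (Assumption \ref{assu_regularity}), so that all densities involved are genuine continuous functions and the Chapman--Kolmogorov identities hold pointwise. A secondary technical point is verifying Novikov's condition so that the Girsanov exponential is a true martingale and controlling the bridge expectation uniformly in $(u,v,t)$; both follow from the uniform boundedness of $b$ and $b'$, but the uniformity over all $x,y\in\mathbb{R}$ and $t\in(0,1]$ that the Lemma demands must be tracked carefully to ensure the constants depend only on $\delta$ and $L$.
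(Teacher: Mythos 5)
Your proposal follows the same skeleton as the paper's proof: the scale transformation $s$ (the paper uses $g(y)=\int_x^y du/\sigma(u)$, anchored at $x$, which is immaterial), Girsanov to remove the bounded drift $b=-\tfrac12\sigma'\circ s^{-1}$, the It\^o rewriting of $\int_0^t b(B_r)\,dB_r$ via a primitive of $b$, the identification $e^{B(v)-B(u)}=\sqrt{\sigma(x)/\sigma(y)}$, and the bound $|b'+b^2|\le C(\delta,L)$ producing the factors $e^{\pm Ct}$. The one genuine divergence is your final step, and it is exactly where you (correctly) locate the remaining work: you condition on both endpoints and bound a Brownian-bridge expectation of $\exp(-\tfrac12\int_0^t(b^2+b')(Y_r)\,dr)$, which forces you to justify the disintegration of the path-space Radon--Nikodym derivative along $Y_t=v$ and to identify the conditioned reference law as a bridge (via an $h$-transform or density-regularity argument). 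The paper sidesteps this entirely by working in weak form: it computes $\mathbb{E}[\Phi(M^x_t)]$ for arbitrary bounded $\Phi\ge 0$, extracts $e^{\pm Ct}$ \emph{pathwise} --- since $-b'-b^2$ is bounded along every trajectory, no endpoint conditioning is needed --- after which the surviving exponential factor $\exp\bigl(\int_0^{B_t}b(u)\,du\bigr)=\sqrt{\sigma(x)/\sigma(g^{-1}(B_t))}$ is a function of $B_t$ alone, so the expectation reduces to a one-dimensional Gaussian integral and the density bound falls out by the change of variables $y=g^{-1}(z)$. You could adopt this test-function formulation to eliminate your own anticipated obstacle at no cost: the pointwise bound on $b^2+b'$ makes the bridge machinery unnecessary. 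Your route, once the disintegration is carried out rigorously, buys a slightly sharper statement (a conditional representation of $p(t,x,y)$ itself, in the spirit of bridge representations of heat kernels), but for the two-sided bound claimed in the Lemma it is strictly more work; otherwise the constants and their dependence on $\delta,L$ track identically in both arguments.
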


The content of Lemma \ref{density} is surely known to experts, as the study of asymptotic expansions for the heat kernel is a classical subject. 
 On the other hand Lemma \ref{density} is  non-asymptotic and of global nature, which we find useful. It seems to us that this result should follow from the precise estimates in \cite[Corollary 3.1 and Theorem 4.1]{LiYa86}, but we cannot guarantee this assertion as we lack geometric training. Hence we rather provide our own self-contained proof:

\begin{proof}[Proof of Lemma \ref{density}]
We use throughout that $B$ is a Brownian motion started at the origin. Let $g\colon\mathbb{R}\to\mathbb{R}$ be the strictly increasing and surjective function defined by $$g(y) \coloneqq \int_x^y \frac{1}{\sigma(u)} \,du,$$ and $g^{-1}$ its inverse. By Itô's lemma we have $$g(M_t^x)=B_t-\frac{1}{2}\int_0^t \sigma'(M_s^x) \,ds.$$ Hence $Y_t\coloneqq g(M_t^x)$ satisfies $Y_t=B_t+\int_0^t b(Y_s)ds$, where $b\coloneqq -\frac{1}{2}\sigma' \circ g^{-1}$.

For any $\Phi \geq 0$ measurable and bounded, we have
\begin{align}
    \mathbb{E}\left[ \Phi (M_t^x) \right] 
    &= \mathbb{E}\left[ \Phi(g^{-1}(Y_t)) \right] \nonumber\\
    &= \mathbb{E}\left[ \Phi(g^{-1}(B_t))\exp{\left( \int_0^t b(B_s) \,dB_s -\frac{1}{2} \int_0^t b(B_s)^2  \,ds\right)} \right] \nonumber\\
    &= \mathbb{E}\left[ \Phi(g^{-1}(B_t))\exp{\left( \int_0^{B_t} b(u) \,du -\frac{1}{2} \int_0^t b'(B_s)  \,ds - \frac{1}{2} \int_0^t b(B_s)^2 \,ds \right)} \right] \label{ErwartungswertPhi}
\end{align}
where the second equality is due to Girsanov's theorem (applicable as $\sigma'$ is bounded), whereas the last equality follows from 
\begin{equation*}
    \int_0^{B_t} b(u) \,du 
    = \int_0^t b(B_s) \,dB_s + \frac{1}{2} \int_0^t b'(B_s) \,ds,
\end{equation*}  
which is just Itô's lemma applied to the function $f(z)\coloneqq \int_0^z b(u) \,du$. 

To get an upper bound for $\eqref{ErwartungswertPhi}$ note that,
\begin{align} \label{dasmuessenwirersetzen} 
    -b'-b^2 
    \leq -b' 
    = \frac{1}{2}(\sigma \cdot \sigma '' ) \circ g^{-1}
    \leq \frac{L}{2\delta}
\end{align}
which implies,
\begin{equation*}
    \exp{\left( -\frac{1}{2} \int_0^t b'(B_s) \,ds - \frac{1}{2} \int_0^t b(B_s)^2 \,ds \right)} 
    \leq \exp{(t\frac{L}{4\delta})}.
\end{equation*}
Inserting in $\eqref{ErwartungswertPhi}$ with $C_1\coloneqq\frac{L}{4\delta}$ yields,
\begin{equation}\label{obereSchrankefuerErwartungswert}
    \mathbb{E}\left[ \Phi(M_t^x) \right]
    \leq \exp{(tC_1)} \mathbb{E}\left[ \Phi(g^{-1}(B_t))\exp{\left( \int_0^{B_t} b(u) \,du  \right)} \right] .
\end{equation}
Note that
\begin{equation*}
    b
    = -\frac{1}{2}\sigma' \circ g^{-1}
    = -\frac{1}{2} \frac{(\sigma\circ g^{-1})'}{\sigma \circ g^{-1}}
    = -\frac{1}{2} ( \log\circ\sigma\circ g^{-1})' 
\end{equation*}
and $g^{-1}(B_0)=g^{-1}(0)=x$. Therefore
\begin{equation*}
    \int_0^{B_t} b(u) \,du 
    = -\frac{1}{2} \log\left( \frac{\sigma(g^{-1}(B_t))}{\sigma(g^{-1}(B_0))} \right)
    = \log\sqrt{  \frac{\sigma(x)}{\sigma(g^{-1}(B_t))} }
\end{equation*}
We can now rewrite the expectation on the right-hand side of \eqref{obereSchrankefuerErwartungswert} by using this observation and then expressing it in terms of the density of $B_t$. The desired representation then follows from a change of variables with $y=g^{-1}(z)$, namely:
\begin{align*}
   & \mathbb{E}\left[ \Phi(g^{-1}(B_t))\exp{\left( \int_0^{B_t} b(u) \,du \right)} \right] \\
    =& \mathbb{E}\left[ \Phi(g^{-1}(B_t)) \sqrt{ \frac{\sigma(x)}{\sigma(g^{-1}(B_t))} } \right] \\
    =& \int_{\mathbb{R}} \Phi(g^{-1}(z)) \sqrt{ \frac{\sigma(x)}{\sigma(g^{-1}( z ))} }     \frac{1}{\sqrt{2\pi t}}\exp{\left( -\frac{z^2}{2t}\right)} \,dz \\
    =& \int_{\mathbb{R}} \Phi(y)\sqrt{ \frac{\sigma(x)}{\sigma(y)} } \frac{1}{\sqrt{2\pi t}}\exp{\left( -\frac{g(y)^2}{2t}\right)} g'(y) \,dy \\
    =& \int_{\mathbb{R}} \Phi(y)\sqrt{ \frac{\sigma(x)}{\sigma(y)} }  \frac{1}{\sqrt{2\pi t}}\exp{\left( - \frac{1}{2t} \left(\int_x^y\frac{1}{\sigma(u)} \,du\right)^2 \right)} \frac{1}{\sigma(y)} \,dy.
\end{align*}
We conclude that
\begin{equation*}
   \mathbb{E}\left[ \Phi(M_t^x) \right]
    \leq \exp{(tC_1)}\int_{\mathbb{R}} \Phi(y) \sqrt{\frac{\sigma(x)}{\sigma(y)}} \frac{1}{\sqrt{2\pi t}}\exp{\left( -\frac{1}{2t} d_\sigma(x,y)^2 \right)} \frac{1}{\sigma(y)} \,dy .
\end{equation*}
As this holds for all $\Phi\geq 0$ measurable and bounded, this proves the desired upper bound.

The lower bound follows analogously by replacing \eqref{dasmuessenwirersetzen} with
\begin{equation*} 
    -b'-b^2  
    = \frac{1}{2}(\sigma \sigma '')\circ g^{-1} - (\frac{1}{2}\sigma '\circ g^{-1})^2 
    \geq -\left( \frac{L\delta}{2}+\frac{L^2}{4} \right) .
\end{equation*}
This implies
\begin{equation*}
    \exp{\left( -\frac{1}{2} \int_0^t b'(B_s) \,ds - \frac{1}{2} \int_0^t b(B_s)^2 \,ds \right)} 
    \geq \exp{( -tC_2 )},
\end{equation*}
where $C_2\coloneqq \frac{L\delta}{4}+\frac{L^2}{8}$.
\end{proof}

We are now in a position to prove Theorem $\ref{formula}$.

\begin{proof}[Proof of Theorem \ref{formula}]
Let $q(t,x,y)$ and $p(t,x,y)$ be the transition density functions of $M^x$ and $N^x$ respectively. To ease notation we now drop the superscript $x$ from $M$ and $N$.
Since $M$ and $N$ are Markov and time-homogeneous, we have by the tensorization property of the relative entropy that
\begin{align*}
    \relentr{\mathbb{Q}^x}{{\mathbb{P}^x}}
    &= \text{H}\left( \mathcal{L}(M_0,M_{\frac{1}{n}}, \dots , M_1) \mid \mathcal{L}(N_0,N_{\frac{1}{n}}, \dots , N_1)  \right) \\
    &= \sum_{k=1}^n \int_{\mathbb{R}^2} \log\frac{ d\mathcal{L}( M_{\frac{k}{n}}\vert M_{\frac{k-1}{n}} =x_{k-1} )(x_k)  }{d\mathcal{L}( N_{\frac{k}{n}}\vert N_{\frac{k-1}{n}} =x_{k-1}  )(x_k) } \, d\mathcal{L}(M_{\frac{k-1}{n}},M_{\frac{k}{n}})(x_{k-1},x_k) \\
    &= \sum_{k=1}^n \int_{\mathbb{R}^2} \log \frac{ q(\frac{1}{n},x_{k-1},x_k) }{ p(\frac{1}{n},  x_{k-1},x_k)  }  \,d\mathcal{L}(M_{\frac{k-1}{n}},M_{\frac{k}{n}})(x_{k-1},x_k) \\
    &= \sum_{k=1}^n \mathbb{E}\left[   \log \frac{ q(\frac{1}{n},M_{\frac{k-1}{n}},M_{\frac{k}{n}}) }{ p(\frac{1}{n}, M_{\frac{k-1}{n}},M_{\frac{k}{n}})  }  \right]
\end{align*}
By Lemma $\ref{density}$ for the upper bound of $q$ and the lower bound of $p$, we derive the existence of a constant $C$ such that for all $x,y\in\mathbb{R},t\in(0,1]$:
\begin{equation*}
     \log\frac{q(t,x,y)}{p(t,x,y)} 
     \leq Ct + \frac{1}{2}\log\frac{\sigma(x)\eta(y)}{\sigma(y)\eta(x)} - \log\frac{\sigma(y)}{\eta(y)} -\frac{1}{2t}d_\sigma(x,y)^2 + \frac{1}{2t}d_\eta(x,y)^2.
\end{equation*}
Therefore, for all $n\in\mathbb{N}$ and any $k=1,\dots,n$:
\begin{align}
      &\mathbb{E}\left[\log \frac{ q(\frac{1}{n},M_{\frac{k-1}{n}},M_{\frac{k}{n}}) }{ p(\frac{1}{n}, M_{\frac{k-1}{n}},M_{\frac{k}{n}})  }  \right]      \nonumber\\
     &\leq \mathbb{E}\Bigg[ \frac{C}{n} + \frac{1}{2}\log  \frac{\sigma(M_{\frac{k-1}{n}})\eta( M_{\frac{k}{n}}  )}{\sigma(M_{\frac{k}{n}})\eta(M_{\frac{k-1}{n}})} - \log\frac{\sigma (M_{\frac{k}{n}} )}{\eta (M_{\frac{k}{n}} )} -       \frac{n}{2} d_\sigma(M_{\frac{k-1}{n}},M_{\frac{k}{n}})^2+\frac{n}{2} d_\eta(M_{\frac{k-1}{n}},M_{\frac{k}{n}})^2   \Bigg]       . \label{inequ}  
\end{align} 
Summing over $k=1,\dots,n$ the $\log  \frac{\sigma(M_{\frac{k-1}{n}})\eta( M_{\frac{k}{n}}  )}{\sigma(M_{\frac{k}{n}})\eta(M_{\frac{k-1}{n}})}$ terms form a telescopic sum. So for all $n\in\mathbb{N}$:
\begin{align}
    &\frac{1}{n}  \relentr{\mathbb{Q}^x}{{\mathbb{P}^x}} \nonumber\\
    &\leq \frac{1}{n}\sum_{k=1}^n \mathbb{E}\Bigg[ \frac{C}{n} + \frac{1}{2}\log  \frac{\sigma(M_{\frac{k-1}{n}})\eta( M_{\frac{k}{n}}  )}{\sigma(M_{\frac{k}{n}})\eta(M_{\frac{k-1}{n}})}  - \log\frac{\sigma (M_{\frac{k}{n}} )}{\eta (M_{\frac{k}{n}} )}  -         \frac{n}{2} d_\sigma(M_{\frac{k-1}{n}},M_{\frac{k}{n}})^2+\frac{n}{2} d_\eta(M_{\frac{k-1}{n}},M_{\frac{k}{n}})^2   \Bigg]       \nonumber\\
    &= \frac{C}{n} + \mathbb{E}\Bigg[  \frac{1}{2n}\log \frac{\sigma(M_0)\eta(M_1)}{\sigma(M_1)\eta(M_0)}   - \frac{1}{n}\sum_{k=1}^n \log \frac{\sigma (M_{\frac{k}{n}}) }{\eta (M_{\frac{k}{n}}) }     
    -\frac{1}{2}\sum_{k=1}^n  d_\sigma(M_{\frac{k-1}{n}},M_{\frac{k}{n}})^2 +\frac{1}{2}\sum_{k=1}^n   d_\eta(M_{\frac{k-1}{n}},M_{\frac{k}{n}})^2     \label{obereSchrankeRelEntr}
    \Bigg]
\end{align}

We now show that $\eqref{obereSchrankeRelEntr}$ converges to the right-hand side of $\eqref{equformula}$. 
Firstly, since $\sigma$ and $\eta$ are bounded away from $0$ and bounded from above, it is clear that,
\begin{equation*}
     \lim_{n \to \infty}  \mathbb{E}\left[  \frac{1}{2n}\log \frac{\sigma(M_0)\eta(M_1)}{\sigma(M_1)\eta(M_0)}  \right] 
     = 0
\end{equation*}
Furthermore for almost every $\omega\in\Omega$ the map $t\mapsto \log(\frac{\sigma(M_t(\omega))}{\eta(M_t(\omega))})$ is continuous, so that
\begin{equation*}
    \sum_{k=1}^n \log\frac{\sigma(M_{\frac{k}{n}}(\omega))}{\eta(M_{\frac{k}{n}}(\omega))}\mathbbm{1} _{ \left(\frac{k-1}{n},\frac{k}{n} \right] }(t) 
    \xrightarrow{n \to \infty}{} \log\frac{\sigma(M_t(\omega))}{\eta(M_t(\omega))} \qquad \text{Leb}\otimes\mathbb{P}-a.e.
\end{equation*}
Since the left-hand side above is bounded uniformly in $n$, we have by dominated convergence
\begin{align*}
    \lim_{n \to \infty} \mathbb{E}\left[  \frac{1}{n}\sum_{k=1}^n \log\frac{\sigma(M_{\frac{k}{n}})}{\eta(M_{\frac{k}{n}})}  \right] 
     &=\lim_{n \to \infty}\mathbb{E}\left[  \int_0^1 \sum_{k=1}^n  \log\frac{\sigma(M_{\frac{k}{n}})}{\eta(M_{\frac{k}{n}})}  \mathbbm{1} _{ \left(\frac{k-1}{n},\frac{k}{n} \right] }(s)  \,ds \right] \\
    &= \mathbb{E}\left[  \int_0^1 \log\frac{\sigma (M_s)}{\eta(M_s)}  \,ds \right] .
\end{align*}
Next, Itô's lemma applied to the function $F(z):=\int_0^z\frac{1}{\eta(u)}\,du$ yields,
\begin{equation*}
   F(M_t)= \int_0^{M_t} \frac{1}{\eta(u)} \,du 
    = \int_0^t \frac{\sigma(M_u)}{\eta(M_u)} \,dB_u - \frac{1}{2} \int_0^t \frac{\eta'(M_u)}{\eta(M_u)^2}\sigma(M_u)^2 \,du.
\end{equation*}
Thus we have
\begin{align*}
\mathbb{E}\left[\frac{1}{2}\sum_{k=1}^n d_\eta(M_{\frac{k-1}{n}},M_{\frac{k}{n}})^2   \right]
     &=\mathbb{E}\left[  \frac{1}{2}\sum_{k=1}^n  \left( \int_{M_{\frac{k-1}{n}}}^{M_{\frac{k}{n}}} \frac{1}{\eta(u) } \,du \right)^2 \right]\\
     &= \mathbb{E}\left[  \frac{1}{2}\sum_{k=1}^n   \left(  F(M_{\frac{k}{n}}) - F(M_{\frac{k-1}{n}})   \right)^2 \right].
\end{align*}
At this point one can directly deduce 
\begin{equation}
    \lim_{n\to\infty}\mathbb{E}\left[\frac{1}{2}\sum_{k=1}^n d_\eta(M_{\frac{k-1}{n}},M_{\frac{k}{n}})^2   \right]
    = \mathbb{E}\left[\int_0^1  \frac{1}{2} \left( \frac{\sigma(M_u)}{\eta(M_u)} \right)^2 \,du \right],\label{eq:Riemannian_sum}
\end{equation}
from the fact that $$\mathbb{E}\left[  \frac{1}{2}\sum_{k=1}^n   \left(  F(M_{\frac{k}{n}}) - F(M_{\frac{k-1}{n}})   \right)^2 \right]\to \mathbb E[\langle F(M) \rangle_1],$$ as follows directly per properties of the quadratic variation of Itô processes with bounded coefficients. To be self-contained we provide now a detailed argument:
\begin{align*}
     &\mathbb{E}\left[  \frac{1}{2}\sum_{k=1}^n   \left(  F(M_{\frac{k}{n}}) - F(M_{\frac{k-1}{n}})   \right)^2 \right] \\
     &= \frac{1}{2}\sum_{k=1}^n \mathbb{E}\Bigg[ \left( \int_{\frac{k-1}{n}}^{\frac{k}{n}} \frac{\sigma(M_u)}{\eta(M_u)} \,dB_u \right)^2 - \int_{\frac{k-1}{n}}^{\frac{k}{n}} \frac{\sigma(M_u)}{\eta(M_u)} \,dB_u \int_{\frac{k-1}{n}}^{\frac{k}{n}} \frac{\eta'(M_u)}{\eta(M_u)^2}\sigma(M_u)^2 \,du + \\
     &\hspace{9cm}
     \frac{1}{4} \left(\int_{\frac{k-1}{n}}^{\frac{k}{n}} \frac{\eta'(M_u)}{\eta(M_u)^2}\sigma(M_u)^2 \,du\right)^2  \Bigg] \\
     &= \mathbb{E}\Bigg[ \frac{1}{2}\int_0^1 \left(\frac{\sigma(M_u)}{\eta(M_u)}\right)^2 \,du - \frac{1}{2}\sum_{k=1}^n \int_{\frac{k-1}{n}}^{\frac{k}{n}} \frac{\sigma(M_u)}{\eta(M_u)} \,dB_u \int_{\frac{k-1}{n}}^{\frac{k}{n}} \frac{\eta'(M_u)}{\eta(M_u)^2}\sigma(M_u)^2 \,du + \\ &\hspace{9cm}
     \frac{1}{2}\sum_{k=1}^n\frac{1}{4}\left(\int_{\frac{k-1}{n}}^{\frac{k}{n}} \frac{\eta'(M_u)}{\eta(M_u)^2}\sigma(M_u)^2 \,du\right)^2  \Bigg] .
\end{align*}
The expression above converges to $\mathbb{E}\left[ \frac{1}{2}\int_0^1 \left(\frac{\sigma(M_u)}{\eta(M_u)}\right)^2 \,du\right]$ as $n\to\infty$. Indeed, since $\lVert\eta '\rVert_{\infty}<L$ and $\sigma,\eta\in(\delta,\frac{1}{\delta})$, we have that
\begin{align*}
    \mathbb{E}\left[\left \lvert \int_{\frac{k-1}{n}}^{\frac{k}{n}} \frac{\sigma(M_u)}{\eta(M_u)} \,dB_u \right \rvert \right] 
   & \leq \left( \mathbb{E}\left[ \left(  \int_{\frac{k-1}{n}}^{\frac{k}{n}} \frac{\sigma(M_u)}{\eta(M_u)} \,dB_u   \right)^2 \right]  \right)^{\frac{1}{2}} \\
   & = \left( \mathbb{E}\left[ \int_{\frac{k-1}{n}}^{\frac{k}{n}} \frac{\sigma(M_u)^2}{\eta(M_u)^2} \,du    \right]  \right)^{\frac{1}{2}} \\
   & \leq \sqrt{ \frac{1}{n}}\frac{1}{\delta^2},
\end{align*}
and therefore as $n\to\infty$
\begin{equation*}
    \left| \sum_{k=1}^n \int_{\frac{k-1}{n}}^{\frac{k}{n}} \frac{\sigma(M_u)}{\eta(M_u)} \,dB_u \int_{\frac{k-1}{n}}^{\frac{k}{n}} \frac{\eta'(M_u)}{\eta(M_u)^2}\sigma(M_u)^2 \,du   \right| 
    \leq \frac{L}{\delta^4}\frac{1}{n} \sum_{k=1}^n  \left|  \int_{\frac{k-1}{n}}^{\frac{k}{n}} \frac{\sigma(M_u)}{\eta(M_u)} \,dB_u   \right|  
    \xrightarrow{L^1} 0 .
\end{equation*}
Moreover,
\begin{equation*}
  \sum_{k=1}^n \left( \int_{\frac{k-1}{n}}^{\frac{k}{n}} \frac{\eta'(M_u)}{\eta(M_u)^2}\sigma(M_u)^2 \,du \right)^2  
  \leq  \sum_{k=1}^n \frac{L^2}{\delta^8}\frac{1}{n^2} 
  = \frac{L^2}{\delta^8}\frac{1}{n}
  \xrightarrow{L^1} 0 \quad \text{as $n\to\infty$}.
\end{equation*}
All in all, we have established  \eqref{eq:Riemannian_sum}. Applying the same arguments with $\eta$ replaced by $\sigma$, shows 
\begin{equation*}
    \lim_{n\to\infty}  \mathbb{E}\left[\frac{1}{2}\sum_{k=1}^n d_\sigma(M_{\frac{k-1}{n}},M_{\frac{k}{n}})^2   \right] 
    = \frac{1}{2}.
\end{equation*}

Finally we conclude that
\begin{equation*}
   h^u({\mathbb{Q}^x}|{\mathbb{P}^x}):= \limsup_{n\to\infty} \frac{1}{n}\relentr{\mathbb{Q}^x}{{\mathbb{P}^x}}  
    \leq \frac{1}{2}\mathbb{E}\left[ \int_0^1  \frac{\sigma(M_s)^2}{\eta(M_s)^2} - 1 - \log\frac{\sigma(M_s)^2}{\eta(M_s)^2} \,ds\right].
\end{equation*}
 Analogously, by the lower bound for $q$ and the upper bound for $p$ from Lemma $\ref{density}$, applied in $\eqref{inequ}$, we get:
 \begin{equation*}
  h^\ell({\mathbb{Q}^x}|{\mathbb{P}^x}):=  \liminf_{n\to\infty} \frac{1}{n} \relentr{\mathbb{Q}^x}{{\mathbb{P}^x}}  
    \geq \frac{1}{2} \mathbb{E}\left[\int_0^1 \frac{\sigma(M_s)^2}{\eta(M_s)^2} - 1 - \log\frac{\sigma(M_s)^2}{\eta(M_s)^2} \,ds\right],
\end{equation*}
which finishes the proof.
\end{proof}

\begin{rem}
We find the limit in \eqref{eq:Riemannian_sum}, equivalently
\begin{equation*}
    \lim_{n\to\infty}\mathbb{E}_{\mathbb Q^x}\left[\sum_{k=1}^n d_\eta(X_{\frac{k-1}{n}},X_{\frac{k}{n}})^2   \right]
    =  \mathbb{E}_{\mathbb Q^x}\left[\int_0^1  \eta(X_u)^{-2} d\langle X\rangle_u\right ] ,
\end{equation*}
curious and worth highlighting. We suspect that there is a geometric interpretation to this result, but we do not pursue this line of reasoning here. 
\end{rem}


\section{Proof of Corollary \ref{corollary}}

In this part it will be convenient to apply a weaker version of Lemma \ref{density}, namely: 

\begin{lem} \label{density_2}
Let $\sigma$, $M^x$ and $p(t,x,y)$ as in Lemma \ref{density}.
Under Assumption \ref{assu_regularity} there is a constant $C$, depending only on $\delta$ and $L$, such that, for all $t\in(0,1]$ and $x,y\in\mathbb{R}$:
\begin{equation*}
   e^{-C t} \frac{\delta^2}{\sqrt{2\pi t}} e^{\left( - (x-y)^2/2t\delta^2 \right)}\leq p(t,x,y) 
    \leq e^{C t} \frac{1}{\delta^2\sqrt{2\pi t}} e^{\left( - \delta^2(x-y)^2/2t \right)} .
\end{equation*}
\end{lem}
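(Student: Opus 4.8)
The plan is to deduce Lemma \ref{density_2} directly from Lemma \ref{density}, with no new probabilistic input: every $\sigma$-dependent quantity appearing in the sharp bounds of Lemma \ref{density} is replaced by an elementary estimate afforded by the two-sided bound $\sigma(\cdot)\in(\delta,1/\delta)$ from Assumption \ref{assu_regularity}. There are two ingredients to control, the prefactor and the geodesic distance.

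First I would bound the prefactor $\sqrt{\sigma(x)/\sigma(y)}\,\sigma(y)^{-1}$. Since $\sigma(x),\sigma(y)\in(\delta,1/\delta)$, one has $\sigma(x)/\sigma(y)\in(\delta^2,\delta^{-2})$ and hence $\sqrt{\sigma(x)/\sigma(y)}\in(\delta,1/\delta)$; combined with $1/\sigma(y)\in(\delta,1/\delta)$ this yields
\[
\delta^2 \;\leq\; \sqrt{\frac{\sigma(x)}{\sigma(y)}}\,\frac{1}{\sigma(y)} \;\leq\; \frac{1}{\delta^2}.
\]
Next I would bound the geodesic distance. Because $1/\sigma(u)\in(\delta,1/\delta)$ for every $u$, integrating between $x$ and $y$ gives $\delta|x-y|\leq d_\sigma(x,y)\leq \delta^{-1}|x-y|$, and squaring (all terms being nonnegative) preserves the inequalities, so that $\delta^2(x-y)^2\leq d_\sigma(x,y)^2\leq \delta^{-2}(x-y)^2$.

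I would then assemble these into the two bounds of Lemma \ref{density}, setting $C:=\max\{C_1,C_2\}$ so that $e^{C_1 t}\leq e^{Ct}$ and $e^{-C_2 t}\geq e^{-Ct}$ for all $t\in(0,1]$, which merges the two exponential-in-$t$ constants into a single $C$. For the upper bound on $p$, I replace the prefactor by $\delta^{-2}$ and use $d_\sigma(x,y)^2\geq \delta^2(x-y)^2$, which (through the minus sign in the Gaussian factor) gives $e^{-d_\sigma(x,y)^2/2t}\leq e^{-\delta^2(x-y)^2/2t}$ and hence the claimed upper bound. Symmetrically, for the lower bound on $p$, I replace the prefactor by $\delta^2$ and use $d_\sigma(x,y)^2\leq \delta^{-2}(x-y)^2$, giving $e^{-d_\sigma(x,y)^2/2t}\geq e^{-(x-y)^2/2t\delta^2}$ and hence the lower bound.

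The argument is essentially a mechanical substitution of constants, so I do not expect a genuine obstacle here; this is precisely why it is stated as a \emph{weaker} version of Lemma \ref{density}. The only point requiring attention is bookkeeping of the direction of the inequalities: one must track how a lower bound on $d_\sigma(x,y)^2$ becomes an \emph{upper} bound on the Gaussian factor $e^{-d_\sigma(x,y)^2/2t}$ and vice versa, and correspondingly pair the extreme value of the prefactor ($\delta^2$ or $\delta^{-2}$) with the matching side of the density estimate. Once these directions are aligned, the two inequalities of Lemma \ref{density_2} follow immediately.
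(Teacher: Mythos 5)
Your proof is correct and is exactly the argument the paper has in mind: the authors omit the proof of Lemma \ref{density_2}, stating that it ``follows directly from Lemma \ref{density} and Assumption \ref{assu_regularity},'' and your mechanical substitution of the bounds $\delta^2 \leq \sqrt{\sigma(x)/\sigma(y)}\,\sigma(y)^{-1} \leq \delta^{-2}$ and $\delta^2(x-y)^2 \leq d_\sigma(x,y)^2 \leq \delta^{-2}(x-y)^2$ into the two-sided estimate, with $C := \max\{C_1, C_2\}$, is precisely that deduction with the inequality directions correctly paired.
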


This follows directly from Lemma \ref{density} and Assumption \ref{assu_regularity} on $\sigma$, so we omit the proof. We remark that at least the upper bound in Lemma \ref{density_2} can be obtained in far more generality with PDE methods (see Theorems 4.5 and 5.4 in \cite[Chapter 6]{Fr10}).

We put ourselves now in the setting of Corollary \ref{corollary}, and introduce some notation here: If $\mathbb{Q}$ is a probability measure on $\mathcal{C}$ and if $\mathbb{Q} \vert_{\mathcal F^n}$ possesses a density with respect to $n$-dimensional Lebesgue measure\footnote{To be fully precise: the first marginal of $\mathbb{Q} \vert_{\mathcal F^n}$ is a Dirac measure. But as all martingales we consider are started at a fixed $x$ we may ``drop'' the first marginal and consider the density of the remaining arguments w.r.t.\ Lebesgue.}, we denote this density by the lower-case symbol $q_n$. If $X\in \mathcal C$ we denote $X_{(n)}:=(X_{i/n})_{i=0}^n$. Written in this terms, our aim is to establish the almost sure convergence
\begin{align}\label{eq:as_conv_2}
\lim_{n\to\infty}\frac{1}{n}\sum_{k\leq n}\frac{1}{k} \log\left( \frac{ q^x_k}{p^x_k}(M^{x,k}_{(k)})  \right) =  h(\mathbb{Q}^x\,|\,{\mathbb{P}^x}).
\end{align} 

We now omit the $x$-dependence everywhere, in order to keep notation at bay.

\begin{proof}[Proof of Corollary \ref{corollary}]
We will use the following Kolmogorov's version of the strong law of large numbers (KSLLN) for independent non-identically distributed sequences: If $(Y_k)_k$ are independent, with finite second moments, and such that $\sum_k\frac{1}{k^2}\text{Var}(Y_k)<\infty$, then $\frac{1}{n}\sum_{k\leq n} (Y_k-\mathbb E[Y_k]) \to 0$ almost surely as $n\to\infty$. This can be found in \cite[Theorem 2.3.10]{SeSi94}. For us 
$$Y_k:=\frac{1}{k} \log\left( \frac{ q_k}{p_k}(M^{k}_{(k)})  \right ),$$
so by definition the $(Y_k)_k$ are independent and $\mathbb E[Y_k]=  \frac{1}{k} \relentrk{\mathbb{Q}}{\mathbb{P}}$. Per Theorem \ref{formula} we know $\frac{1}{k} \relentrk{\mathbb{Q}}{\mathbb{P}} \to h(\mathbb{Q}\,|\,{\mathbb{P}})$, hence $\frac{1}{n}\sum_{k\leq n} \mathbb E[Y_k] \to h(\mathbb{Q}\,|\,{\mathbb{P}})$. Thus if we can apply KSLLN then we will have derived \eqref{eq:as_conv_2} as desired. Thus it remains to verify that $\sum_k\frac{1}{k^2}\text{Var}(Y_k)<\infty$, but as $\sum_k\frac{1}{k^2} \mathbb E[Y_k]^2<\infty$ in our case, it is enough to verify 
\begin{align}
\label{eq:verify}
 \sum_k\frac{1}{k^2} \mathbb E[Y_k^2] <\infty.
\end{align}
To this end remark that $Y_k^2\leq \frac{2}{k^2}\left\{\log(q_k(M^{k}_{(k)}) )^2 + \log(p_k(M^{k}_{(k)}) )^2  \right \}$. We now establish 
\begin{align}
\label{eq:verify2}
 \sum_k\frac{1}{k^4} \mathbb E[  \log(\ell_k(M_{(k)}) )^2 ] <\infty,
\end{align}
where we let the symbol $\ell$ stand for either $p$ or $q$,  implying then \eqref{eq:verify}. Applying the Markov property we have $$\ell_k(M_{(k)})=\prod_{i=1}^k \, \ell(1/k, M_{(i-1)/k},M_{i/k}),$$  so taking logarithms, bounding the squared sum by a sum of squares, and applying Lemma  \ref{density_2}, we obtain
\begin{align*}\log(\ell_k(M_{(k)}) )^2 &\leq 4k \sum_{i\leq k}\left\{ \frac{C^2}{k^2} +\log(\delta^2)^2+\frac{1}{4}\log(k/2\pi)^2+\frac{k^2}{4\delta^4}(M_{(i-1)/k}-M_{i/k})^4 \right\} \\
&= 4C^2 + 4k^2\log(\delta^2)^2+k^2\log(k/2\pi)^2+\frac{k^3}{\delta^4}\sum_{i\leq k}(M_{(i-1)/k}-M_{i/k})^4 .
\end{align*}
To check \eqref{eq:verify2}, first observe $\sum_k (4C^2 + 4k^2\log(\delta^2)^2+k^2\log(k/2\pi)^2)/k^4<\infty$, as $\log(x)\leq x^r/r$ for all $x\geq 1$ and $r\in (0,1)$. Second, by the BDG inequality, we have 
$$
\mathbb E[(M_{(i-1)/k}-M_{i/k})^4]\leq c\mathbb E\left[\left(\int_{(i-1)/k}^{i/k} \sigma(M_t)^2 \,dt \right)^2 \right]\leq \frac{c}{\delta^4k^2},
$$
so $\sum_{i\leq k}\mathbb E[(M_{(i-1)/k}-M_{i/k})^4]\leq \frac{c}{\delta^4k} $
and  $\sum_k \frac{1}{k^4}\sum_{i\leq k} \frac{k^3}{\delta^4}\mathbb E[(M_{(i-1)/k}-M_{i/k})^4] \leq c' \sum_k \frac{1}{k^2}  <\infty$. 
\end{proof}

\section{Proof of Proposition \ref{prop:as}}

\begin{proof}[Proof of Proposition \ref{prop:as}] To ease notation we now drop $x$ as a superscript . Let $q(t,x,y)$ and $p(t,x,y)$ denote the transition density functions of respectively $M$ and $N=B$. As in the proof of Theorem \ref{formula} we start with the inequality
\begin{align}
      &\log \frac{ q(\frac{1}{n},M_{\frac{\ell-1}{n}},M_{\frac{\ell}{n}}) }{ p(\frac{1}{n}, M_{\frac{\ell-1}{n}},M_{\frac{\ell}{n}})  }       \nonumber\\
     &\leq \frac{C}{n} + \frac{1}{2}\log  \frac{\sigma(M_{\frac{\ell-1}{n}})}{\sigma(M_{\frac{\ell}{n}})} - \log\sigma (M_{\frac{\ell}{n}} ) -       \frac{n}{2} d_\sigma(M_{\frac{\ell-1}{n}},M_{\frac{\ell}{n}})^2+\frac{n}{2} |M_{\frac{\ell-1}{n}}-M_{\frac{\ell}{n}}|^2        . \label{inequ_as}  
\end{align} 
By the time-homogeneous Markov property we have
 \[\log\left( \frac{ d \mathbb Q}{d\mathbb P}\Big \vert_{\mathcal F^{2^k}} \Big . \big (M_{\ell\cdot 2^{-k}}\big )_{\ell=0}^{2^k}  \right)= \sum_{\ell=1}^{2^k}
\log \frac{ q(2^{-k},M_{(\ell-1)\cdot 2^{-k}},M_{\ell\cdot 2^{-k}}) }{ p(2^{-k}, M_{(\ell-1)\cdot 2^{-k}},M_{\ell\cdot 2^{-k}})  }  ,  
\]
so our pre-limit quantity of interest,
\[\frac{1}{2^k}\log\left( \frac{ d \mathbb Q}{d\mathbb P}\Big \vert_{\mathcal F^{2^k}} \Big . \big (M_{\ell\cdot 2^{-k}}\big )_{\ell=0}^{2^k}  \right),\]
is bounded from above by 
\[C2^{-k} + 2^{-k-1}\log  \frac{\sigma(M_1)}{\sigma(M_0)} - \alpha_k  - \beta_k      +\delta_k,\]
where
\begin{align*}
\alpha_k& := 2^{-k}\sum_{\ell=1}^{2^k} \log\sigma (M_{\ell\cdot 2^{-k}} ),\\
\beta_k&:=  \frac{1}{2} \sum_{\ell=1}^{2^k}d_\sigma(M_{(\ell-1)\cdot 2^{-k}},M_{\ell\cdot 2^{-k}})^2,\\
\delta_k&:= \frac{1}{2}\sum_{\ell=1}^{2^k} |M_{(\ell-1)\cdot 2^{-k}}-M_{\ell\cdot 2^{-k}}|^2.
\end{align*}
We have the almost-sure limit $\lim_k \alpha_k = \int_0^1\log\sigma(M_t)dt$, by convergence of Riemann sums, the function $t\mapsto\log\sigma(M_t)$ being continuous and bounded. We observe also that
\[d_\sigma(M_{(\ell-1)\cdot 2^{-k}},M_{\ell\cdot 2^{-k}})^2 = \left|B_{(\ell-1)\cdot 2^{-k}} - B_{\ell\cdot 2^{-k}}-\frac{1}{2}\int_{(\ell-1)\cdot 2^{-k}}^{\ell\cdot 2^{-k}} \sigma'(M_t)dt \right|^2,\]
by applying It\^o formula to the process $\int_0^{M_t} \frac{dy}{\sigma(y)}$. Hence
\begin{align*}2\beta_k=\sum_{\ell=1}^{2^k}&\left\{ |B_{(\ell-1)\cdot 2^{-k}} - B_{\ell\cdot 2^{-k}}|^2+ \frac{1}{4}\left|\int_{(\ell-1)\cdot 2^{-k}}^{\ell\cdot 2^{-k}} \sigma'(M_t)dt \right|^2-(B_{(\ell-1)\cdot 2^{-k}} \right .  \\ & \left . - B_{\ell\cdot 2^{-k}})\int_{(\ell-1)\cdot 2^{-k}}^{\ell\cdot 2^{-k}} \sigma'(M_t)dt\right\}.
\end{align*}
Now we observe the almost-sure limits $\sum_\ell |B_{(\ell-1)\cdot 2^{-k}} - B_{\ell\cdot 2^{-k}}|^2\to 1$ and, as $\sigma'$ is bounded, also $\sum_\ell |\int_{(\ell-1)\cdot 2^{-k}}^{\ell\cdot 2^{-k}} \sigma'(M_t)dt |^2=0$. This and Cauchy-Schwarz establish that $\beta_k\to 1/2$ almost-surely. Finally we remark that $2\delta_k\to\int_0^1\sigma(M_t)^2dt$ almost surely. \footnote{In case that the reader is not aware of this fact we next provide a self-contained argument: First we estimate for $r<t$ that $\mathbb E[|(M_t-M_r)^2-\int_r^t\sigma(M_s)^2ds|^2] = \mathbb E[|\int_r^t(M_s-M_r)dM_s|^2]\leq c|t-r|^2$, with $c$ only depending on $\|\sigma\|_{\infty}$. Then we observe that $\mathbb E[\{(M_t-M_r)^2-\int_r^t\sigma(M_s)^2ds\}\,|\, \mathcal F_r]=\mathbb E[\int_r^t(M_s-M_r)dM_s\,|\, \mathcal F_r]=0$. Hence developing squares and using the tower property we have
\begin{align*} \mathbb E\left[\left|\sum_{\ell=1}^n(M_{\frac{\ell}{n}}- M_{\frac{(\ell-1)}{n}})^2 -\int_0^1\sigma(M_t)^2dt\right|^2\right] =&\sum_{\ell=1}^n \mathbb E\left[\left|(M_{\frac{\ell}{n}}-M_{\frac{(\ell-1)}{n}})^2-\int_{\frac{(\ell-1)}{n}}^{\frac{\ell}{n}}\sigma(M_s)^2ds\right|^2\right]\leq \frac{c}{n}.
\end{align*}
A Borel-Cantelli argument yields that $\sum_{\ell=1}^{2^k} |M_{(\ell-1)\cdot 2^{-k}}-M_{\ell\cdot 2^{-k}}|^2\to \int_0^1\sigma(M_t)^2dt$ almost-surely, since the sequence $\{2^{-k}\}_k$ is summable.}

All in all, we have proved the almost-sure bound
\[\limsup_k\frac{1}{2^k}\log\left( \frac{ d \mathbb Q}{d\mathbb P}\Big \vert_{\mathcal F^{2^k}} \Big . \big (M_{\ell\cdot 2^{-k}}\big )_{\ell=0}^{2^k}  \right)\leq \frac{1}{2}\int_0^1 \left\{\sigma(M_s)^2 -1 - \log\sigma(M_s)^2 \right\}\,ds .\]
Since similar arguments deal with the lower bound and the limes inferior, we conclude.
\end{proof}

\bibliography{name}
\bibliographystyle{plain}

\end{document}